\newtheorem{thm}{Theorem}[section]
\newtheorem{conj}[thm]{Conjecture}
\newtheorem{quest}[thm]{Question}
\theoremstyle{definition}
\newtheorem{defn}[thm]{Definition}
\theoremstyle{remark}
\newtheorem{rk}[thm]{Remark}
\renewcommand{\P}{\mathbb{P}}
\newcommand{\C}{\mathbb{C}}
\newcommand{\N}{\mathbb{N}}
\newcommand{\PP}{\mathcal{P}}
\newcommand{\EE}{\mathcal{E}}
\newcommand{\bmx}{\begin{matrix}}
\newcommand{\emx}{\end{matrix}}
\newcommand{\bbmx}{\begin{bmatrix}}
\newcommand{\ebmx}{\end{bmatrix}}
\newcommand{\bpmx}{\begin{pmatrix}}
\newcommand{\epmx}{\end{pmatrix}}
\newcommand{\bvmx}{\begin{vmatrix}}
\newcommand{\evmx}{\end{vmatrix}}
\newcommand{\ub}{\underbrace}
\newcommand{\wt}{\widetilde}
\newcommand{\ul}{\underline}
\newcommand{\inc}{\subset}
\newcommand{\setm}{\setminus}
\newcommand{\tto}{\rightarrow}
\newcommand{\ol}{\overline}
\newcommand{\ot}{\otimes}
\newcommand{\OT}{\bigotimes}
\DeclareMathOperator{\Con}{Con}
\DeclareMathOperator{\TNS}{TNS}
\DeclareMathOperator{\inn}{in}
\DeclareMathOperator{\out}{out}
\DeclareMathOperator{\QMF}{QMF}
\newcommand{\Flat}{\mathrm{Flat}}
\newcommand{\mc}{\mathrm{mc}}
\newcommand{\ital}{\textit}
\title{On the geometry of Tensor Network States of $2 \times N$ Grids}
\author[add1]{Parth Sarin}
\ead{psarin@stanford.edu}
\address[add1]{Student in Department of Mathematics at Stanford University}
\begin{document}
\begin{abstract}

We discuss the geometry of a class of tensor network states, called projected entangled pair states in the Physics literature. We provide initial results towards a question of Verstraete and Rizzi regarding the tensor network state of an $M \times N$ grid; we partially answer the question for a $2 \times N$ grid. We also study the $2 \times N$ grid sitting on a torus and provide initial results towards understanding the Zariski closure of the set of tensor network states associated to this graph. Finally, we give explicit tensors that provide optimal (using currently available methods) bounds on the border rank of a generic tensor in the tensor network state of the $2 \times N$ grid.

\end{abstract}

\begin{keyword}
solid state physics \sep tensor network \sep projected entangled pair states \sep border rank \sep quantum max-flow \sep algebraic varieties
\MSC[2010] 15A69I
\end{keyword}

\maketitle
\tableofcontents

\section{Introduction} A \textit{tensor network} associated to a graph is a recipe for constructing tensors in large spaces from tensors in small spaces. Let $G = (V,E)$ be a graph with dangling edges (i.e., edges that are only connected to one vertex). For example, if $G$ is regular of degree $n$, with one dangling edge at each vertex, a tensor network state allows us to construct tensors in $(\C^d)^{\ot |V|}$ from tensors in $(\C^d)^{\ot n}$. Typically, $d$ and $n$ are small (e.g., $d=2$, $n=3$ or $4$) and $|V|$ is large.

For this reason, tensor networks can represent tensor information more ``efficiently'' since in practical implementations, the space $(\C^d)^{\ot n}$ will be much smaller than $(\C^d)^{\ot |V|}$ (see, e.g., \cite{MR3236394}). From the perspective of algebraic geometry, tensor networks provide a natural way of constructing varieties of tensors that are of interest and tractable. In solid state physics, tensor network states are used to model the interactions of particles at a quantum level. They are often used to approximate high-dimensional data like those which arise in the study of quantum entanglement (see, e.g., \cite{MR3244614}).

Ideally, one would like a complete geometric description of the tensors in $(\C^d)^{\ot |V|}$ attainable with a given graph $G$. It is also of interest to give a geometric description of the boundary tensors in $(\C^d)^{\ot |V|}$ to the tensors attainable with a graph $G$ or to give a description of the Zariski closure of the set of tensors attainable with a graph. However, using currently available tools, when a graph has nontrivial topology, all three of these goals are likely out of reach. For example, in the simplest case, if $G$ is a critical, spoked, 3-cycle, the Zariski closure of the set of all tensor network states is the closure of the set of degenerations of matrix multiplication, which is famously a mysterious set (especially in the sense of \cite{MR2865915} regarding secant varieties over the Segre). 

In this paper, we restrict our attention to the $2 \times N$ grid on a torus. For the above reasons, one should not expect a complete understanding of the tensor network states associated to this graph. In this paper, we partially answer the Verstraete--Rizzi question, posed by Frank Verstraete (Universit\"at Wien, Austria) and Matteo Rizzi (Johannes Gutenberg-Universit\"at Mainz, DE) at the July Workshop on Quantum Physics and Geometry in Levico Terme, Italy in 2017. In the physics sense, this setup can be thought of as a model of $2N$ particles arranged in a $2 \times N$ grid on a torus. The results we provide in this paper give insight into the behavior of the entanglement interactions these particles would have. In particular, our results show that the entanglement behavior is ``optimal'' in the sense of Verstraete and Rizzi. These issues are discussed in several papers including \cite{michalek2018tensor} where a more detailed explanation is given of their significance in physics. We also prove bounds on the border rank for a generic tensor in that tensor network state.

\subsection{Tensor network states} Given a graph $G$ as described above, we call the dangling edges \textit{physical edges} and the non-dangling edges \textit{entanglement edges}. Let $\PP, \EE$ be collections of edges consisting of the physical edges and entanglement edges respectively. If $W, \mathbf{V}$ are vector spaces, let $\Con : W \ot \mathbf{V} \ot \mathbf{V}^\ast \tto W$ denote the contraction map (defined linearly by $w \ot T \ot \Phi \mapsto \Phi(T) w$). We depict $\mathbf{V}$ as bolded to signify that it might have additional structure as a space of tensors.

\begin{defn} Let $f : E \tto \N$. Suppose $G$ has $l$ physical edges denoted $p_1, \dots, p_l$. Let $W_1, \dots, W_l$ be complex vector spaces associated to the physical edges with $\dim(W_i) = f(p_i)$. Assign orientations to the entanglement edges of $G$ by choosing an orientation for each edge (the specific choice will not matter for this construction). For a vertex $v$, let $\inn(v)$ denote the set of entanglement edges directed towards $v$ and $\out(v)$ denote the set of edges directed away. 

Suppose $v \in V$. Associate to $v$ the vector space
\[ \mathbf{V}_v := \OT_{\substack{p_i \in v \\ p_i \in \PP}} W_i \ot \OT_{\substack{e \in v \\ e \in \inn(v)}} \C^{f(e)} \ot \left( \OT_{\substack{e \in v \\ e \in \out(v)}} \C^{f(e)} \right)^\ast. \]

Associated to a collection of $T_v \in \mathbf{V}_v$, there is a tensor $t \in W_1 \ot \cdots \ot W_l$ defined by $t = \Con(\OT_{v \in V} T_v)$. The set of all tensor network states associated to $G$ weighted by $f$ is called the \ital{(unbound) tensor network state}, denoted $\TNS(G,f) \inc W_1 \ot \cdots \ot W_l$, and $t \in \TNS(G,f)$ if for every $v \in V$, there exists a tensor $T_v \in \mathbf{V}_v$ such that $t = \Con(\OT_{v \in V} T_v)$.
\end{defn}

Practically, we often choose
\[ f(e) = \begin{cases}
	d & \text{ if } e \text{ is an entanglement edge} \\
	k & \text{ if } e \text{ is a physical edge}
\end{cases}. \]
For this setup, we denote $\TNS(G,f)$ by $\TNS(G,d,k)$. Unfortunately, this differs from the usual notation in the physics literature. Here the roles of ``$d$'' and ``$k$'' are reversed from how they are typically used in physics.

\begin{defn}
	Let $G$ be a regular graph. Define $f : E \tto \N$ by
\[ f(e) = \begin{cases}
	d & \text{ if } e \text{ is an entanglement edge} \\
	k & \text{ if } e \text{ is a physical edge}
\end{cases} \]
	where $d$ and $k$ are fixed. Again, define the spaces $\mathbf{V}_v$ as above for each vertex in the graph $G$. The constraints on $G$ and $f$ force $\mathbf{V}_v \simeq \mathbf{V}_w$ for any two vertices $v,w$ in $G$.
	
	Then, associated to a tensor $T \in \mathbf{V}_v$, there is a tensor $t \in W_1 \ot \cdots \ot W_l$ defined by $t = \Con(\OT_{v \in V} T)$. The set of all tensor network states associated to a regular graph $G$ with such a weight function $f$ is called the \ital{bound tensor network state}. It's denoted $\TNS^\prime (G, d, k) \inc W_1 \ot \cdots \ot W_l$ and $t \in \TNS^\prime (G, d, k)$ if there is a (single) tensor $T \in \mathbf{V}_v$ such that $t = \Con(\OT_{v \in V} T)$. The word ``bound'' refers to the restriction that all tensors must be the same.
\end{defn}

There is a natural \textit{construction function} associated to each of the tensor network states:
\begin{align*}
	\psi &: \bigoplus_{v \in V} \mathbf{V}_v \tto \OT_{p_i \in \PP} W_i \\
	\phi &: \mathbf{V}_v \tto \OT_{p_i \in \PP} W_i
\end{align*}
$\psi$ constructs $\TNS(G,f)$ and $\phi$ constructs $\TNS^\prime (G,f)$.

\subsection{Grid graphs and the Verstraete--Rizzi question} There has been recent interest in the following two graphs:

\begin{figure}[H]
\centering
\begin{subfigure}{.45\textwidth}
  \centering
  \includegraphics[width=5cm]{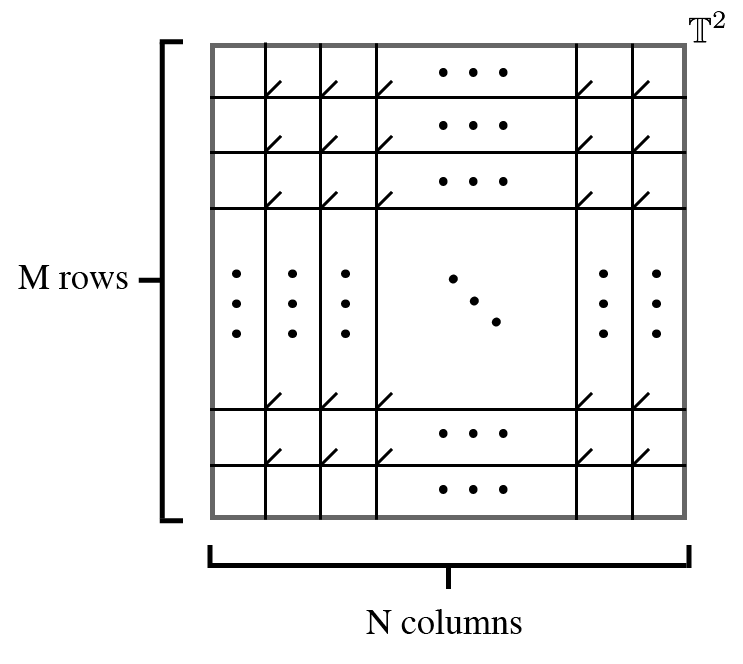}
  \caption{An $M \times N$ grid on a torus, denoted as $G_{M \times N}$, that has $MN$ physical edges.}
  \label{fig:N,M-grid-torus}
\end{subfigure}%
\hspace{.05\textwidth}
\begin{subfigure}{.45\textwidth}
  \centering
  \includegraphics[width=5cm]{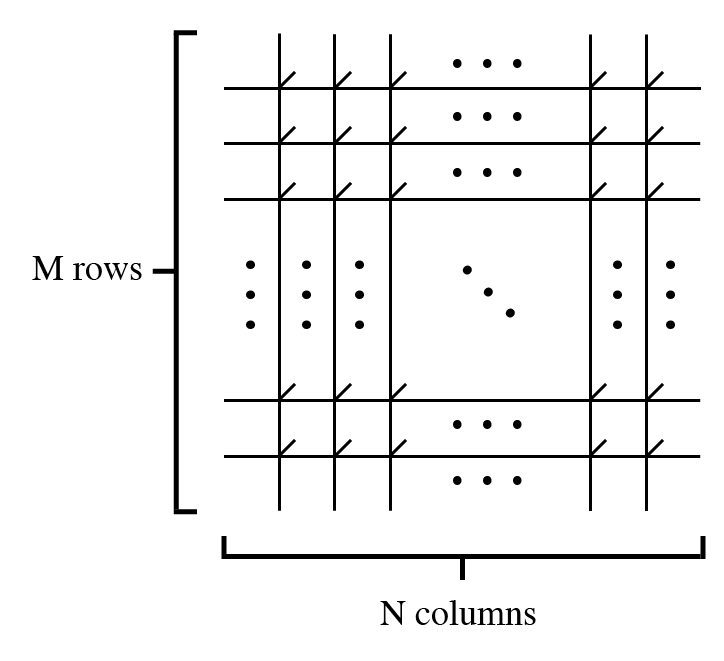}
  \caption{An $M \times N$ grid that is not on a torus, denoted $\wt{G}_{M \times N}$, that has $MN + 2M + 2N$ physical edges.}
  \label{fig:N,M-grid-dangling-edges}
\end{subfigure}
\end{figure}

In $\wt{G}_{2 \times N}$, even though the outer edges are physical edges, we often want them to have different weights than the other physical edges. This is because one should think of the external dangling edges as coming from ``outside sources'' (e.g., a machine in a lab that builds quantum states) and the internal dangling edges as the states one is trying to manipulate. We often choose
\[ f(e) = \begin{cases}
	d & \text{ if } e \text{ is an entanglement edge} \\
	k & \text{ if } e \text{ is a non-external physical edge} \\
	s & \text{ if } e \text{ is an external physical edge}
\end{cases}. \]
Denote this setup as $\TNS(\wt{G}_{2 \times N}, d, k, s) := \TNS(\wt{G}_{2 \times N}, f)$.

In particular, at the July Workshop, Frank Verstraete and Matteo Rizzi asked the following question:

\begin{quest} [The Verstraete--Rizzi question] If $M = N$, for what values of $d$ and $k$ does a generic tensor $T \in \TNS(\wt{G}_{N \times N}, d, k, d)$ have injective (i.e. maximal rank) edge/vertex flattening $T: (\C^d)^{\ot 4N} \tto (\C^k)^{\ot N^2}$? \end{quest}

We partially answer the Verstraete--Rizzi question for $M = 2$ and $N$ generic and study the grid graph more generally. With regards to the question, we obtain the following:
\begin{thm} \label{thm:V-R-question}
	Let $s \in \N$. Then, a generic tensor $T \in \TNS^\prime(\wt{G}_{2 \times N}, s^2, s^2, s^2)$ has edge/vertex flattening that is full rank.
\end{thm}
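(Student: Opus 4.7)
The plan is to exhibit a single explicit tensor $T_0 \in \mathbf{V}_v$ for which $\phi(T_0)$ already has full-rank edge/vertex flattening. Since the non-maximal-rank locus is cut out by the vanishing of certain minors and is therefore Zariski-closed, producing one such witness will establish that a generic $T$ has the same property.

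To construct $T_0$, I would use the identification $\C^{s^2} \cong V \otimes V$ with $V := \C^s$ to split every tensor factor into two ``layers.'' Every vertex of the $2 \times N$ grid has combined degree $5$ (entanglement, external physical, and internal physical edges together), so $\mathbf{V}_v \cong V^{\otimes 10}$, with the ten factors labeled by the five directions ``up,'' ``down,'' ``left,'' ``right,'' ``internal'' together with a layer index $1$ or $2$. These directional labels are intrinsic to each grid vertex, so a single tensor defined using them serves simultaneously as the vertex tensor at every vertex, as required by $\TNS^\prime$. I would take $T_0$ to be the product of five copies of the ``identity tensor'' $\sum_i e_i \otimes e_i \in V \otimes V$, one for each of the following five pairings of $V$-factors at a vertex:
\[ P_1 \leftrightarrow U_1, \quad P_2 \leftrightarrow D_2, \quad L_1 \leftrightarrow R_1, \quad L_2 \leftrightarrow R_2, \quad U_2 \leftrightarrow D_1. \]

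The verification would proceed by chain tracing: after contracting across all $2N$ vertices, the identity tensors concatenate into disjoint ``paths'' in the doubled graph. I would check by direct trace that each of the $4N$ internal $V$-factors is the endpoint of a path of length at most $3$, terminating in a unique external $V$-factor, and that the remaining $8$ external $V$-factors split into $4$ ``horizontal'' chains, one per row per layer, connecting the far-left external factor to the far-right. Consequently $\phi(T_0)$ would factor as $M \otimes C$, where $M$ realizes a linear isomorphism from the $4N$ ``matched'' external $V$-factors onto the $4N$ internal $V$-factors, and $C$ is a nonzero element on the remaining $8$-factor external subspace. The associated flattening $(\C^{s^2})^{\otimes(2N+4)} \to (\C^{s^2})^{\otimes 2N}$ would therefore be surjective, which is exactly full rank in this dimensional regime. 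I expect the main obstacle to be bookkeeping at the four corners, where some edges that are entanglement at generic vertices become external and one must confirm that the five chain types still close up correctly; but because $T_0$ is defined purely in terms of directional labels and every edge has dimension $s^2$, this amounts to a routine combinatorial verification.
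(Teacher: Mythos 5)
Your proposal is correct, and it shares the paper's overall strategy: reduce to exhibiting a single vertex tensor whose bound contraction has surjective edge/vertex flattening (the rank-deficient locus is Zariski-closed, so one witness suffices), split each $\C^{s^2}$ as $V \otimes V$ with $V = \C^s$, and take the vertex tensor to be a product of identity tensors on a pairing of the ten $V$-slots. Where you genuinely differ is in the choice of pairing, and the difference matters for the verification. The paper's witness is the iterated matrix multiplication tensor
\[ T = \sum_{i,j,k,l,m \in [s]} a^i_j \otimes b^m_i \otimes c^k_l \otimes d^j_k \otimes e^l_m, \]
whose pairing, in your labels, is the $5$-cycle $P_2 \leftrightarrow U_1$, $U_2 \leftrightarrow D_1$, $D_2 \leftrightarrow R_1$, $R_2 \leftrightarrow L_1$, $L_2 \leftrightarrow P_1$: the physical slots are hooked into the horizontal flow, so the resulting chains run across several columns, and the paper must impose an ordering on the vertices and do global index bookkeeping to check that the $4N$ indices landing on the physical edges are all distinct and each controllable from some external edge (up, left, or down, depending on the column). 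Your matching $P_1 \leftrightarrow U_1$, $P_2 \leftrightarrow D_2$, $U_2 \leftrightarrow D_1$, $L_1 \leftrightarrow R_1$, $L_2 \leftrightarrow R_2$ instead decouples vertical from horizontal: each internal slot is routed, within its own column, to one of the four external vertical slots of that column ($U_1(p)$, $U_2(p)$, $D_1(p^\prime)$, $D_2(p^\prime)$, with the two length-$3$ chains passing through the rung edge), while the left/right slots thread straight through as four row-long chains terminating at the eight horizontal external slots --- which is also exactly what happens at the corner columns, so your anticipated corner difficulty indeed evaporates. Your factorization $\phi(T_0) = M \otimes C$ then holds, with $M$ a tensor product of identity maps and $C$ a product of four nonzero chain tensors, giving surjectivity by feeding dual basis vectors into the matched vertical slots and any functionals pairing nontrivially with $C$ into the horizontal ones. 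What the paper's choice buys is contact with a familiar, well-studied tensor; what yours buys is a column-local verification with no global index tracing. Both are valid proofs of the theorem.
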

\begin{thm} \label{thm:V-R-question-unbound}
	Let $d \geq k$. Then, a generic tensor $T \in \TNS(\wt{G}_{2 \times N}, 1, k, d)$ has edge/vertex flattening that is full rank.
\end{thm}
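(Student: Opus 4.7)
The crucial observation is that setting the entanglement weight to $1$ collapses all contractions to scalar multiplications: every entanglement edge contributes only a trivial $\C$ factor to each vertex space $\mathbf{V}_v$. Unpacking the definition of $\Con$ on rank-one factors, the construction function reduces, after canonical identifications, to a pure tensor product over vertices,
\[
\psi\bigl((T_v)_{v \in V}\bigr) \;=\; \bigotimes_{v \in V} T_v,
\]
where each $T_v$ is now regarded as a tensor on the physical edges at $v$ only. Consequently a generic $t \in \TNS(\wt{G}_{2 \times N}, 1, k, d)$ has the form $t = \bigotimes_v T_v$ with each $T_v$ generic in $\mathbf{V}_v$.

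\textbf{Main step.} Because each physical edge (external or non-external) is incident to a unique vertex, the edge/vertex flattening respects this vertex-wise factorization:
\[
\Flat(t) \;=\; \bigotimes_{v \in V} \Flat(T_v),\qquad \Flat(T_v) : (\C^d)^{\ot m_v} \tto \C^k,
\]
where $m_v$ is the number of external physical edges incident to $v$. Since $M = 2$, every vertex of $\wt{G}_{2 \times N}$ lies on the boundary of the grid, so $m_v \geq 1$ for every $v$ (with $m_v = 2$ at the four corners and $m_v = 1$ at the remaining $2N - 4$ vertices). Given $d \geq k$, we have $d^{m_v} \geq k$, so a generic $T_v$ yields a surjective $\Flat(T_v)$ --- this is the standard Zariski-open condition that a generic $k \times d^{m_v}$ matrix has full rank $k$. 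Intersecting these finitely many open conditions over vertices keeps the set of good $(T_v)_{v \in V}$ Zariski-open and dense in $\bigoplus_v \mathbf{V}_v$.

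\textbf{Conclusion and main obstacle.} Applying rank-multiplicativity for Kronecker products, $\mathrm{rank}(A \ot B) = \mathrm{rank}(A) \cdot \mathrm{rank}(B)$, to the decomposition above gives $\mathrm{rank}\,\Flat(t) = k^{2N}$; and $d \geq k$ forces $d^{2N+4} \geq k^{2N}$, so this equals the codomain dimension and hence is the maximal possible rank. The only nontrivial bookkeeping is the tensor product factorization of $\psi$ in the first paragraph, which is a direct consequence of the identity $\Con(w \ot u \ot \Phi) = \Phi(u)\, w$ degenerating to scalar multiplication when $u \in \C$ and $\Phi \in \C^\ast$. I do not anticipate any genuine obstacle: the theorem essentially reduces to the fact that entanglement dimension $1$ collapses the tensor network to a Segre-type product of independent vertex tensors, on which the genericity statement for linear maps is immediate.
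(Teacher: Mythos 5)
Your proposal is correct, but it reaches the conclusion by a genuinely different route than the paper. The paper's proof is an explicit witness computation: it places $T = \sum_{i=1}^{k} e_i \ot e_i \ot f \ot f \ot f$ at each top-row vertex and a matching tensor $T'$ at each bottom-row vertex, expands $\psi$ of this tuple in coordinates (the cross terms over all $i_1,\dots,i_N,j_1,\dots,j_N \in [k]$ survive precisely because the weight-$1$ entanglement contractions impose no constraints), and checks that evaluating against dual basis vectors produces every basis vector of $B^{\ot 2N}$, so the flattening is surjective; genericity then follows from semicontinuity of rank. You instead isolate the structural mechanism that makes that computation work: weight-$1$ entanglement edges trivialize the contraction, so after canonical identifications $\psi$ is the Segre-type map $(T_v)_v \mapsto \bigotimes_v T_v$, the edge/vertex flattening factors vertex-wise, and rank multiplicativity of Kronecker products together with genericity of full-rank $k \times d^{m_v}$ matrices (using $m_v \geq 1$ at every vertex of the $2\times N$ grid and $d \geq k$) gives rank $k^{2N}$, the maximum possible. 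The paper's witness is, under your identification, exactly a tuple whose per-vertex flattenings are rank-$k$ partial identity matrices, so the two arguments share the same engine; yours makes it conceptual rather than computational. What your version buys: it characterizes $\TNS(\wt{G}_{2 \times N}, 1, k, d)$ as precisely the cone of vertex-decomposable tensors, and it generalizes verbatim to any graph in which $d^{m_v} \geq k$ holds at every vertex. What the paper's version buys: an explicit witness (explicitness in the complexity-theoretic sense the paper cares about in its final section), and a coordinate computation template that transfers to the bound case of Theorem \ref{thm:V-R-question}, where the entanglement dimensions are not $1$ and your collapse is unavailable. One step you share with the paper and could state explicitly: passing from ``generic tuple $(T_v)_v$'' to ``generic $t \in \TNS$'' uses that the full-rank locus is Zariski open in $\ol{\P\TNS}$ and that this closure is irreducible (being the image closure of an irreducible space under a polynomial map), so a single nonempty open condition is automatically dense.
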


\begin{rk}
	For the setups in Theorems \ref{thm:V-R-question} and \ref{thm:V-R-question-unbound}, it is impossible for a generic tensor to have an injective flattening. Instead, the best possible approximation to injectivity is surjectivity: showing that the flattening has full rank shows that it is ``as injective as possible.'' 
\end{rk}

The grid graph is particularly complex because it is unlike other graphs that have been used as tensor network states such as those used in Matrix Product States (in the sense of, e.g., \cite{MR3244614}) or in Hierarchical Tensor Representation (in the sense of \cite{MR3236394}). The present case differs because here, the building-block spaces, $\mathbf{V}_v$ are also complicated, being the tensor product of five vector spaces.

We study this problem for the case that $M = 2$ and $N$ is arbitrary. In that sense, this paper is the next step towards the problem after \cite{arXiv:1801.09106}, which studies the case that $M = 1$ and $N$ arbitrary. This is an important step, though, because when $M = 1$, $\mathbf{V}_v \simeq A \ot B \ot B^\ast$, a space that is better understood than the present case where $\mathbf{V}_v \simeq A \ot B \ot B \ot B^\ast \ot B^\ast$. And, for $M \geq 2$, one still has $\mathbf{V}_v \simeq A \ot B \ot B \ot B^\ast \ot B^\ast$ so techniques we use in this paper might admit generalization.

\subsection{The geometry of $\TNS(G_{2 \times N}, d, k)$}
Our main goal is to obtain geometric information about $\TNS(G_{2 \times N}, 2, 2)$ and $\TNS^\prime (G_{2 \times N}, 2, 2)$, such as the border rank of generic tensors in each set and explicit parameterizations of large subsets of each of the sets.

\begin{defn}
	A tensor $T \in W_1 \ot \cdots \ot W_l$ has \ital{rank one} if there exist vectors $w_j \in W_j$ such that $T = w_1 \ot \cdots \ot w_l$.
\end{defn}
\begin{defn}
	Let $T \in W_1 \ot \cdots \ot W_l$. The \ital{rank} of $T$ is the smallest $r$ such that $T$ can be written as the sum of $r$ rank one tensors. We denote the rank of $T$ as $R(T) = r$
\end{defn}
\begin{defn}
	The border rank of $T$ is the smallest $r$ such that $T$ can be written as a limit of tensors of rank $r$. In particular, $T$ cannot be written as a limit of tensors of rank $s$ for any $s < r$. We denote the border rank of a tensor $T$ by $\ul{R}(T) = r$.
\end{defn}

A simple parameter count implies that the unbound tensor network state $\TNS(G_{2 \times N}, 2, 2)$ has dimension at most $64N$. We construct a submanifold that has dimension on the order of $N$, which enables us to prove the following results:
\begin{thm} \label{thm:rank-of-TNS}
	If $N$ is even and $T \in \TNS(G_{2 \times N}, 2, 2)$ is generic, then $\ul{R}(T) \geq 2^N$.
\end{thm}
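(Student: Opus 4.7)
The plan is to lower-bound the border rank via a matrix flattening. Partition the $2N$ physical edges of $G_{2 \times N}$ into the $N$ physical edges at the top row of vertices and the $N$ at the bottom row; this partition yields a flattening $T^\flat : (\C^2)^{\ot N} \tto (\C^2)^{\ot N}$, and it is a standard fact that $\ul{R}(T) \geq \mathrm{rank}(T^\flat)$ (any rank-one tensor has flattening rank one, and rank is lower semicontinuous, so the inequality survives limits). Since $T^\flat$ is a $2^N \times 2^N$ matrix, the theorem reduces to showing that a generic $T \in \TNS(G_{2 \times N}, 2, 2)$ has $T^\flat$ of full rank.

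Because $\TNS(G_{2 \times N}, 2, 2)$ is the image of the polynomial construction map $\psi$ on $\bigoplus_v \mathbf{V}_v$, the condition $\mathrm{rank}(T^\flat) = 2^N$ is Zariski open: it is the non-vanishing of $\det T^\flat$ pulled back to the parameter space. It therefore suffices to exhibit a single collection $(T_v)_{v \in V}$ whose image has $T^\flat$ invertible; a generic $T$ will then inherit the property. My plan for the construction exploits the column structure of $G_{2 \times N}$: the two vertices in each column are joined by vertical entanglement edges of total dimension $4$, enough to faithfully transmit one bit of physical information. I would choose the two building-block tensors in each column so that contraction along the vertical edges enforces a copy of the physical index from the top vertex to the bottom vertex, making the column's contribution to $T^\flat$ proportional to $\sum_{i} |i\rangle\langle i| = \mathrm{Id}_{\C^2}$. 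Tensoring over the $N$ columns then produces the identity on $(\C^2)^{\ot N}$, up to an overall scalar coming from the horizontal edges.

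What remains is that overall scalar. The $N$ columns form a cycle horizontally, so the horizontal data contracts to a cyclic trace $\mathrm{tr}(M_1 M_2 \cdots M_N)$ of operators $M_c$ read off from the horizontal parts of the column tensors. Taking these horizontal parts to be rank-one and symmetric under the top-bottom swap built into the copy map reduces the cyclic trace to $\mathrm{tr}(P^N)$ for a fixed operator $P$, and the parity hypothesis $N$ even enters precisely as the condition guaranteeing nonvanishing: natural choices of $P$ compatible with the vertical copy construction (e.g., Pauli-like operators) have $\mathrm{tr}(P^N) = 0$ for $N$ odd but nonzero for $N$ even. Once both ingredients are in place, $T^\flat$ equals a nonzero scalar multiple of $\mathrm{Id}_{(\C^2)^{\ot N}}$, so $\mathrm{rank}(T^\flat) = 2^N$, which transfers to generic $T$ by Zariski-openness and gives $\ul{R}(T) \geq 2^N$.

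The main obstacle I expect is fitting the column-local copy map \emph{and} the nonvanishing cyclic horizontal trace into a single explicit family of $T_v$'s, since each entanglement edge has dimension only $2$ and there is no slack to decouple the two roles by inflating bond dimensions. Producing a concrete $T_v$ that simultaneously implements both behaviors, and checking the parity dependence of the trace, is where the real work lies; the remaining ingredients (lower semicontinuity of matrix rank, and the standard flattening bound on border rank) are routine.
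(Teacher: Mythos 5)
Your strategy coincides with the paper's: bound $\ul{R}(T)$ below by the rank of the top-row/bottom-row flattening, note that maximal flattening rank is a Zariski-open condition pulled back to the parameter space $\bigoplus_v \mathbf{V}_v$ (so a single witness suffices), and aim for a witness whose flattening is the identity on $(\C^2)^{\ot N}$ --- exactly the copy tensor $\sum_{i_1,\dots,i_N} e_{i_1}\ot e_{i_1}\ot \cdots \ot e_{i_N}\ot e_{i_N}$ that the paper's construction produces. Those reductions are correct, and your openness argument is stated more carefully than the paper's one-line appeal to semicontinuity. The genuine gap is that you never exhibit the vertex tensors: the construction is precisely what you label ``where the real work lies,'' and it is the entire mathematical content of the theorem, since everything else is, as you yourself note, routine. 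Worse, your final paragraph expresses doubt that the vertical copy map and a nonvanishing horizontal contraction can coexist at bond dimension $2$, so the proposal ends not with a deferred computation but with an unresolved question about whether the plan can be carried out at all.

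It can, in two different ways, and writing down either one would close the gap. The paper's way is a \emph{correlated} family: at each vertex it places a sum of two rank-one terms --- at vertex $1$, for instance, $T_{v_1}=e_1\ot e_1\ot e_1\ot e_2\ot e_2+e_2\ot e_2\ot e_2\ot e_1\ot e_1$ (physical leg first) --- arranged so that contraction propagates an exclusive binary choice horizontally around the cycle; the hypothesis that $N$ is even is exactly what lets this alternating pattern close up around the torus (this is your $\tr(P^N)$ picture with $P$ Pauli-like, vanishing for odd $N$), and the output is the copy tensor above. Alternatively, the decoupling you feared impossible is in fact available: each column has \emph{two} vertical entanglement edges, so at each top vertex take $\bigl(\sum_i e_i\ot e_i\ot e_1\bigr)\ot\bigl(\sum_a e_a\ot e_a\bigr)$ --- copy the physical index across one vertical edge, put a constant vector on the other, and put the identity matrix on the pair of horizontal legs --- with the analogous tensor at each bottom vertex (identifying edge spaces with their duals via the chosen bases). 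Contraction then yields $4\bigotimes_{p=1}^N\bigl(\sum_i e_i\ot e_i\bigr)$, a nonzero multiple of the copy tensor, with no parity condition at all; the evenness of $N$ is an artifact of the paper's particular correlated family rather than of the flattening bound itself. Without one of these explicit families, however, what you have is a plan, not a proof.
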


\begin{rk}
	The border rank bound we obtain in Theorem \ref{thm:rank-of-TNS} is the best bound possible using matrix flattenings. This is because the best flattening is the $N,N$ flattening, which is a map $(\C^2)^{\ot N} \tto (\C^2)^{\ot N}$ and thus can have rank at most $2^N$. Moreover for any $T \in (\C^k)^{\ot 2N}$, there are no known methods that give better bounds on border rank since $2N$ is even. In particular, Young-Koszul flattenings do not provide better bounds (a parameter count verifies this but for a more detailed explanation see, e.g., Chapter 2.4 of \cite{MR3729273}; Proposition 2.4.2.1 is especially informative).
	
	Additionally, we explain in the proof why $N$ must be even.
\end{rk}

\begin{thm} \label{thm:rank-of-bound-TNS}
	If $N$ is even and $T \in \TNS^\prime(G_{2 \times N}, 2, 2)$ is generic, then $\ul{R}(T) \geq 2^{N - 1}$.
\end{thm}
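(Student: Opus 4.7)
The strategy mirrors that of Theorem~\ref{thm:rank-of-TNS}. Since border rank is bounded below by the rank of any matrix flattening, it suffices to exhibit a single tensor $T_0 \in \mathbf{V}_v$ for which the $N,N$ edge/vertex flattening of $\phi(T_0)$ has rank at least $2^{N-1}$. The locus of such $T_0$ is Zariski-open in $\mathbf{V}_v$ (it is the complement of the common zero set of the $2^{N-1} \times 2^{N-1}$ minors of the flattening), so once one such $T_0$ is produced, a generic bound tensor will have flattening rank at least $2^{N-1}$ by upper semicontinuity of rank-drop, and hence border rank at least $2^{N-1}$.

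To build $T_0$, my plan is to adapt the tensor used in the proof of Theorem~\ref{thm:rank-of-TNS}. That argument exhibits an explicit collection $\{T_v\}_{v \in V}$ in the unbound setting whose contraction achieves flattening rank $2^N$; the issue in the bound setting is that a single $T_0 \in \mathbf{V}_v$ must play the role of all $2N$ of these vertex tensors simultaneously. I would express the $N,N$ flattening of $\phi(T_0)$ in transfer-matrix form as the trace of a product of $N$ identical column operators (obtained by contracting the two vertical entanglement edges between the two vertices of each column), and then choose $T_0$ so that the resulting map $(\C^2)^{\ot N} \to (\C^2)^{\ot N}$ has image containing an explicit $2^{N-1}$-dimensional subspace --- for instance, the even-parity subspace spanned by the vectors $e_{i_1} \ot \cdots \ot e_{i_N}$ with $i_1 + \cdots + i_N \equiv 0 \pmod 2$.

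The rank verification would then amount to exhibiting an explicit nonzero $2^{N-1} \times 2^{N-1}$ minor indexed by this parity-even subspace, most naturally via a direct computation on a well-chosen basis or an induction/degeneration argument on $N$. The factor-of-two drop from Theorem~\ref{thm:rank-of-TNS} is to be interpreted as a $\Z/2$ obstruction forced by boundedness (roughly, repeating the same tensor around the cycle imposes a parity constraint that the unbound construction can circumvent by breaking translation symmetry), and the hypothesis that $N$ is even should enter precisely in ensuring this parity constraint is consistent with closing up the product of column operators around the torus.

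The principal obstacle is the explicit construction of $T_0$ and the verification that the chosen parity subspace actually lies in the image. In contrast to the unbound case, where each vertex tensor can be tailored independently to hit a prescribed basis vector, here a single $32$-dimensional tensor must produce flattening rank growing exponentially in $N$, so $\phi(T_0)$ depends on $T_0$ in a highly nonlinear, correlated way. Identifying the correct $\Z/2$ obstruction responsible for the factor-of-two loss, and then choosing $T_0$ to saturate the bound modulo this obstruction, is the substantive step; the flattening and semicontinuity ingredients surrounding it are comparatively routine.
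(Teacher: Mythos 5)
Your proposal is a strategy outline rather than a proof, and the gap is exactly the one you name yourself: the tensor $T_0$ is never constructed, and the claim that the image of its $N,N$ flattening contains the even-parity subspace is never verified. Everything that is actually carried out in your write-up (flattening lower-bounds border rank, openness of the locus where a fixed minor is nonzero, semicontinuity) is the routine part; the theorem is not proved until an explicit $T_0$ is exhibited and its flattening rank computed. Moreover, your insistence on the balanced $N,N$ flattening is what creates the difficulty you then cannot resolve: with a single repeated tensor you must show that a specific $2^{N-1}$-dimensional subspace (your parity subspace) lies in the image of a map whose matrix entries are traces of products of identical column operators, and you give no mechanism for doing so, nor evidence that the parity obstruction you posit is the actual source of the factor-of-two loss.

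The paper sidesteps this entirely by changing the flattening. It takes the single tensor $T \in (\C^2)^{\ot 5}$ from the Section 3 construction specialized to $A = C = e_1$, $B = D = e_2$, and considers the unbalanced $N+1,\,N-1$ flattening whose input slots are the physical edges at vertices $1, 1^\prime, 2, 3, \dots, N$ and whose output slots are the remaining $N-1$ lower vertices. The target space then has dimension exactly $2^{N-1}$, so only surjectivity needs to be checked, and this is a one-line computation:
\[ \phi(T)(e_1, e_1, e_{i_2}, e_{i_3}, \dots, e_{i_N}) = 2\, e_{i_2} \ot e_{i_3} \ot \cdots \ot e_{i_N}, \]
where feeding $e_1 \ot e_1$ into the first column plays precisely the symmetry-breaking role that, in the unbound proof of Theorem \ref{thm:rank-of-TNS}, was played by choosing a different tensor at column $1$. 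In other words, the ``boundedness obstruction'' you describe is circumvented not by identifying a $\Z/2$ invariant but by spending two input legs of the flattening to pin down the configuration at one column; the bound $2^{N-1}$ then comes from the dimension of the smaller side of the flattening, not from a parity constraint inside a $2^N$-dimensional space. If you want to salvage your own route, you would need to actually compute the $N,N$ flattening of $\phi(T_0)$ for a concrete $T_0$ (the transfer-matrix formalism you mention is a reasonable tool), but as written the substantive step of the proof is absent.
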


\begin{rk}
	For the same reason as above, the Young-Koszul flattening will not provide better bounds for the border rank of a generic tnesor.
\end{rk}

\section{The Verstraete--Rizzi Question} Number the vertices of the $2 \times N$ grid according to the following picture:

{\centering
\includegraphics[width=.9\textwidth]{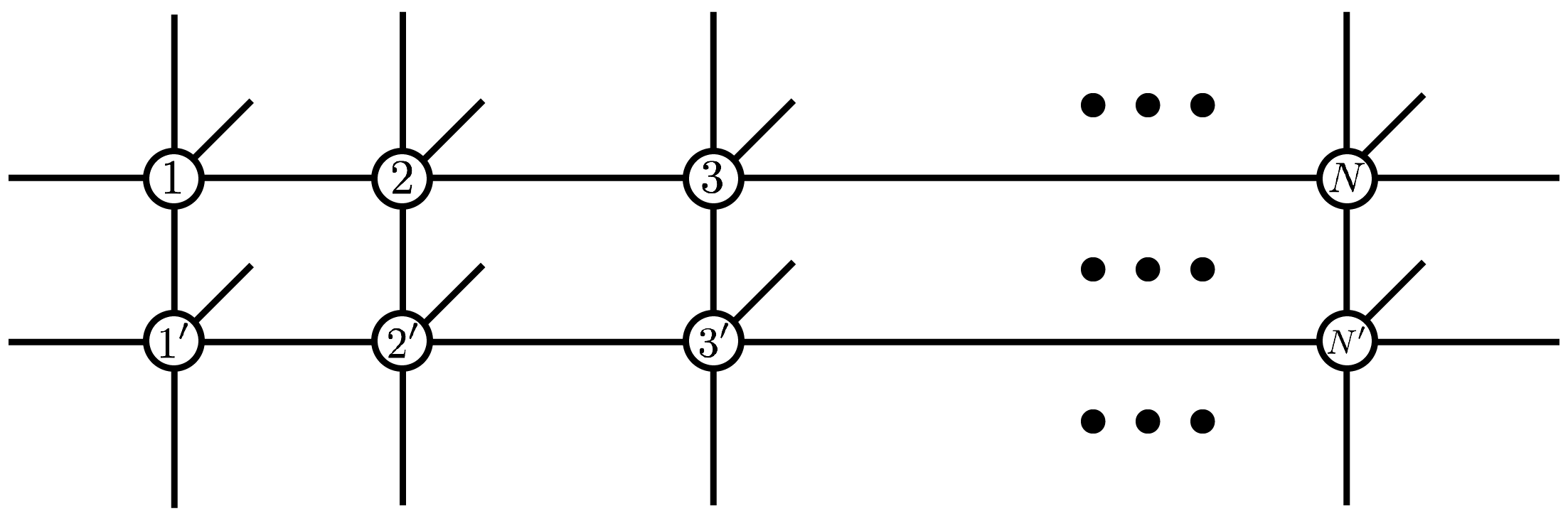}\par
}

We first prove Theorem \ref{thm:V-R-question}, which concerns the bound tensor network state.

\begin{proof} [Proof of Theorem \ref{thm:V-R-question}]
It is sufficient to show that there is a single tensor in $\TNS(G,s^2)$ with surjective edge/vertex flattening. Let $[s] = \{1, 2, \dots, s\}$. Place the iterated matrix multiplication tensor at each vertex indexed as:
\[T = \sum_{i,j,k,l,m \in [s]} a^i_j \ot b^m_i \ot c^k_l \ot d^j_k \ot e^l_m\]
Here, $a^i_j \in A$, $b^m_i \in B$, $c^k_l \in C$, $d^j_k \in D$, $e^l_m \in E$ are basis elements for their respective spaces and $A,B,C,D,E \simeq \C^{s^2} \simeq \C^s \ot \C^s$. Further, the vector space $A$ is attached to the upper edge, $B$ is attached to the physical edge, $C$ is attached to the right edge, $D$ is attached to the lower edge, and $E$ is attached to the left edge. We will denote the copy of $T$ placed at vertex $v$ by adding the subscript $v$ to each of the indices, i.e.
\[T_v = \sum_{i_v,j_v,k_v,l_v,m_v \in [s]} a^{i_v}_{j_v} \ot b^{m_v}_{i_v} \ot c^{k_v}_{l_v} \ot d^{j_v}_{k_v} \ot e^{l_v}_{m_v}\]

Let $\phi : (\C^{s^2})^{\ot 5} \tto (\C^{s^2})^{\ot 4N+4}$ be the construction function for $\TNS^\prime(\wt{G}_{2 \times N}, s^2, s^2, s^2)$. We claim that $\phi(T)$ has surjective edge/vertex flattening.

The edge/vertex flattening of $\phi(T)$ is a map
\[ \left(E^\ast \right)^{\ot 2} \ot \left(A^\ast \right)^{\ot N} \ot \left(C^\ast \right)^{\ot 2} \ot \left(D^\ast \right)^{\ot N} \tto B^{\ot 2N} \]
To show that it is surjective, we must show that every basis element of $B^{\ot 2N}$ is in the image of $\phi(T)$. Since contraction is a local operation, the proof takes place on a subgraph of $\wt{G}_{2 \times N}$:

{\centering
\includegraphics[width=.3\textwidth]{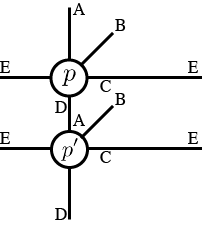}\par
}

The map $\phi$ performs two types of contractions: the left/right contractions and the up/down contractions. We use the symbol $\leftrightsquigarrow$ to denote the contraction. I.e., $V \leftrightsquigarrow W$ is the contraction of $V$ and $W$. So, we might write
\begin{align*}
	D_p \leftrightsquigarrow A_{p^\prime} &: A^{\ot N} \ot B^{\ot 2N} \ot C^{\ot 2} \ot D^{\ot N} \ot E^{\ot 2} \\
	&\tto A^{\ot (N-1)} \ot B^{\ot 2N} \ot C^{\ot 2} \ot D^{\ot (N-1)} \ot E^{\ot 2}
\end{align*} 
Since we've expressed $T$ in a basis, each contraction can be written as an equality between indices.

First consider the up/down contractions. For these, let $p = 1, 2, \dots, N$. The contractions give:
\[ D_p \leftrightsquigarrow A_{p^\prime}: \; \begin{cases}
	j_p = i_{p^\prime}  \\
	k_p = j_{p^\prime}
\end{cases} \]

Now consider the left-right contractions. These contractions give:
\[ C_p \leftrightsquigarrow E_{p+1}: \; \begin{cases}
	k_p = l_{p+1} \\
	l_p = m_{p+1}
\end{cases} \implies \begin{array}{l} 
	k_p = l_{p+1} = m_{p+2} \\ 
	l_1 = m_2
\end{array} \]
\[ C_{p^\prime} \leftrightsquigarrow E_{(p+1)^\prime}: \; \begin{cases}
	k_{p^\prime} = l_{(p+1)^\prime} \\
	l_{p^\prime} = m_{(p+1)^\prime}
\end{cases} \implies \begin{array}{l}
	k_{p^\prime} = l_{(p+1)^\prime} = m_{(p+2)^\prime} \\
	l_{1^\prime} = m_{2^\prime}
\end{array} \]

We claim that the indices on the $b$ vectors are all distinct. To verify this, adopt the ordering $1 < 1^\prime < 2 < 2^\prime < \cdots < N < N^\prime$ and replace all indices with the index of the \textit{lowest possible subscript}. For example the equation $k_p = l_{p+1} = m_{p+2}$ implies that $m_{p+2}$ should be replaced by $k_p$.

Two indices appear on the $b$ vectors. They are $i$ and $m$. First consider the $i$ index. The only contraction equation containing $i$ is the first: $j_p = i_{p^\prime}$. And, $j_p$ does not appear anywhere else in the equations. So, for all upper vertices, $i_p$ is left unchanged, and for all lower vertices, $i_{p^\prime}$ is replaced by $j_p$. Thusfar, all the indices on the $b$ vertices are still distinct.

Next, consider the $m$ index. For $1 \leq p \leq N-2$ we have:
\begin{align*}
	&k_p = m_{p+2}
	&k_{p^\prime} = m_{(p+2)^\prime}
\end{align*}

So, $m_1$ and $m_{1^\prime}$ are left untouched. Then, $m_2$ is replaced by $l_1$ and $m_{2^\prime}$ is replaced by $l_{1^\prime}$ (see the contractions $C \leftrightsquigarrow E$). And for all $r = 3, 3^\prime, 4, 4^\prime, \dots, N, N^\prime$, $m_r$ is replaced by $k_{r-2}$. Thus, all of the indices appearing on the $b$ vectors are distinct. 

We can ``control'' the value of each index in the image of the flattening by inputting a suitable choice of basis vectors in the source spaces. We can control the value of $i_p$ and $i_{p^\prime} = j_p$ by feeding the tensor the appropriate vector in $A_p^\ast$ since the indices on $a$ are $i$ and $j$. And, we can control the value of $m_1, m_{1^\prime}$ using $E_1^\ast, E_{1^\prime}^\ast$. We can control the value of $m_2 = l_1, m_{2^\prime} = l_{1^\prime}$ using $E_1^\ast, E_{1^\prime}^\ast$. Finally, for $p = 3, 4, \dots, N$, we can control the value of $m_p = k_{p-2} = j_{(p-1)^\prime}$ using $D_{(p-1)^\prime}^\ast$ and can control the value of $m_{p^\prime} = k_{(p-2)^\prime}$ using $D_{(p-2)^\prime}^\ast$. So, using various input vectors, we can obtain a basis for $B^{\ot 2N}$ in the image of the edge/vertex flattening of $\phi(T)$. Thus, the edge/vertex flattening is surjective, so it has full rank.
\end{proof}

Now we prove Theorem \ref{thm:V-R-question-unbound}, which concerns the unbound tensor network state.

\begin{proof} [Proof of Theorem \ref{thm:V-R-question-unbound}]
	Let $f = e_1$. Let $A_v, B_v, C_v, D_v, E_v$ be vector spaces indexed by the vertices of $\wt{G}_{2 \times N}$. Then, at a vertex of $\wt{G}_{2 \times N}$, let $A_v$ be associated to the upper edge, $B_v$ to the diagonal physical edge, $C_v$ to the right edge, $D_v$ to the lower edge, and $E_v$ to the left edge. The dimensions of $A_v ,C_v ,D_v ,E_v$ vary from vertex to vertex and $\dim(B_v) = k$ always. We must exhibit $2N$ tensors, $T_v \in A_v \ot B_v \ot C_v \ot D_v \ot E_v$, such that $\psi$ evaluated on the direct sum of those tensors has surjective edge/vertex flattening.
	
	Let
	\[ T = e_1 \ot e_1 \ot f \ot f \ot f + \cdots + e_k \ot e_k \ot f \ot f \ot f \]
	\[ T^\prime = f \ot e_1 \ot e_1 \ot f \ot f + \cdots + f \ot e_k \ot e_k \ot f \ot f \]
	Then, define $T_{v_1} = T_{v_2} = \cdots = T_{v_N} = T$. These are the tensors associated to the upper vertices of the graph. Define $T_{v_{1^\prime}} = T_{v_{2^\prime}} = \cdots = T_{v_{N^\prime}} = T^\prime$. These are the tensors associated to the lower vertices of the graph. Then, we can explicitly compute:
	\begin{align*}
		&\psi \left(T_{v_1} \oplus \cdots \oplus T_{v_N} \oplus T_{v_{1^\prime}} \oplus \cdots \oplus T_{v_{N^\prime}} \right) = \\
		&\sum_{\substack{i_1, \dots, i_N \in [k] \\ j_1, \dots, j_N \in [k]}} f^{\ot 4} \ot \ub{e_{i_1} \ot \cdots \ot e_{i_N}}_{\in A_1 \ot \cdots \ot A_N} \ot  \ub{e_{j_1} \ot \cdots \ot e_{j_N}}_{\in D_{1^\prime} \ot \cdots \ot D_{N^\prime}} \ot \ub{e_{i_1} \ot \cdots \ot e_{i_N} \ot  e_{j_1} \ot \cdots \ot e_{j_N}}_{\in B^{\ot 2N}}
	\end{align*}
	And thus, the flattening is surjective. In particular,
	\begin{align*} &\psi \left(T_{v_1} \oplus \cdots \oplus T_{v_N} \oplus T_{v_{1^\prime}} \oplus \cdots \oplus T_{v_{N^\prime}} \right)\left(f^\ast, f^\ast, f^\ast, f^\ast, e_{i_1}^\ast, \dots, e_{i_N}^\ast, e_{j_1}^\ast, \dots, e_{j_N}^\ast \right) = \\
	&\hspace{8mm}e_{i_1} \ot \cdots \ot e_{i_N} \ot  e_{j_1} \ot \cdots \ot e_{j_N}
	\end{align*}
	Thus, we can obtain a basis in the image of the flattening, so the image is the entire space.
\end{proof}

\section{The geometry of $\TNS(G_{2 \times N}, 2, 2)$} We want to exhibit large submanifolds of $\TNS(G_{2 \times N}, 2, 2)$. One strategy to do this is to pick tensors $T_v \in \mathbf{V}_v$ such that their projections to the entanglement edges are the same for every vertex, but the projection to the physical edge varies from vertex to vertex.

This strategy yields tensors in $\bigoplus_1^{2N} (\C^2)^{\ot 5}$ where we can understand the contraction behavior from the entanglement edges alone and then use the component along the physical edges to sweep out interesting spaces in $\TNS(G_{2 \times N}, 2, 2)$. 

For example suppose $N$ is even. Number the vertices of the $2 \times N$ grid in the same way as the proof of Theorem \ref{thm:V-R-question} and let $n \in \{1, 1^\prime, 2, 2^\prime, \cdots, N, N^\prime \}$. Let $A^n_i, B^n_i \in \C^2$ be general vectors for $i \in \{1, 2\}$. Then, consider the following tensors in $(\C^2)^{\ot 5}$, displayed pictorially where $(\C^2)^{\ot 5}$ is represented at a vertex of the $2 \times N$ grid:

{\centering
\includegraphics[width=.9\textwidth]{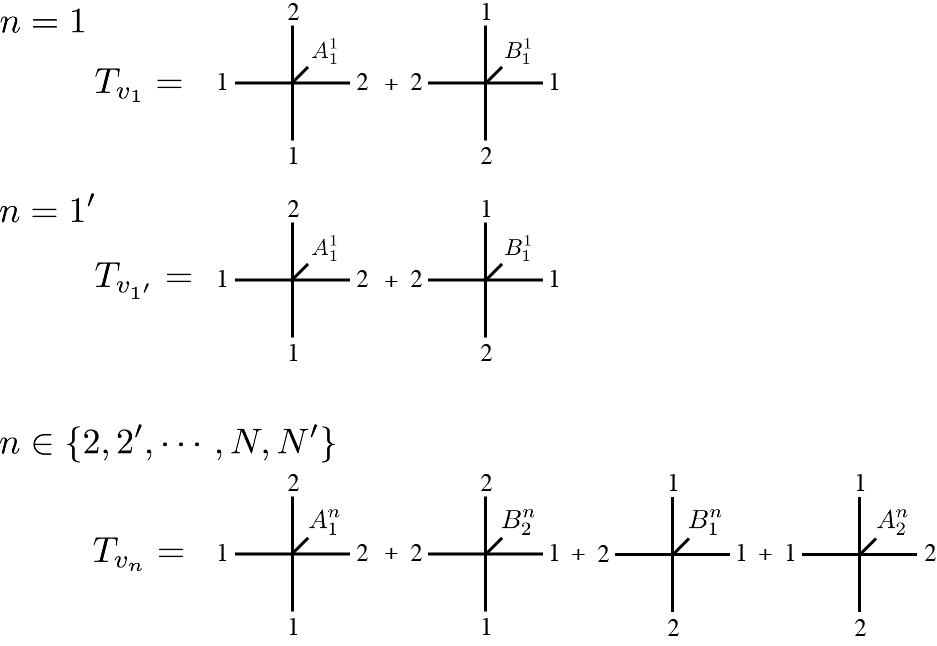}\par
}

Above, the numbers $1$ and $2$ represent basis vectors $e_1, e_2$ respectively. For example, 
\[ T_{v_1} = A_1^1 \ot e_1 \ot e_1 \ot e_2 \ot e_2 + B_1^1 \ot e_2 \ot e_2 \ot e_1 \ot e_1 \in (\C^2)^{\ot 5} \]

Consider $\psi(T_{v_1} \oplus T_{v_{1^\prime}} \oplus \cdots \oplus T_{v_{N^\prime}})$. We can represent it pictorially:

{\centering
\includegraphics[width=.9\textwidth]{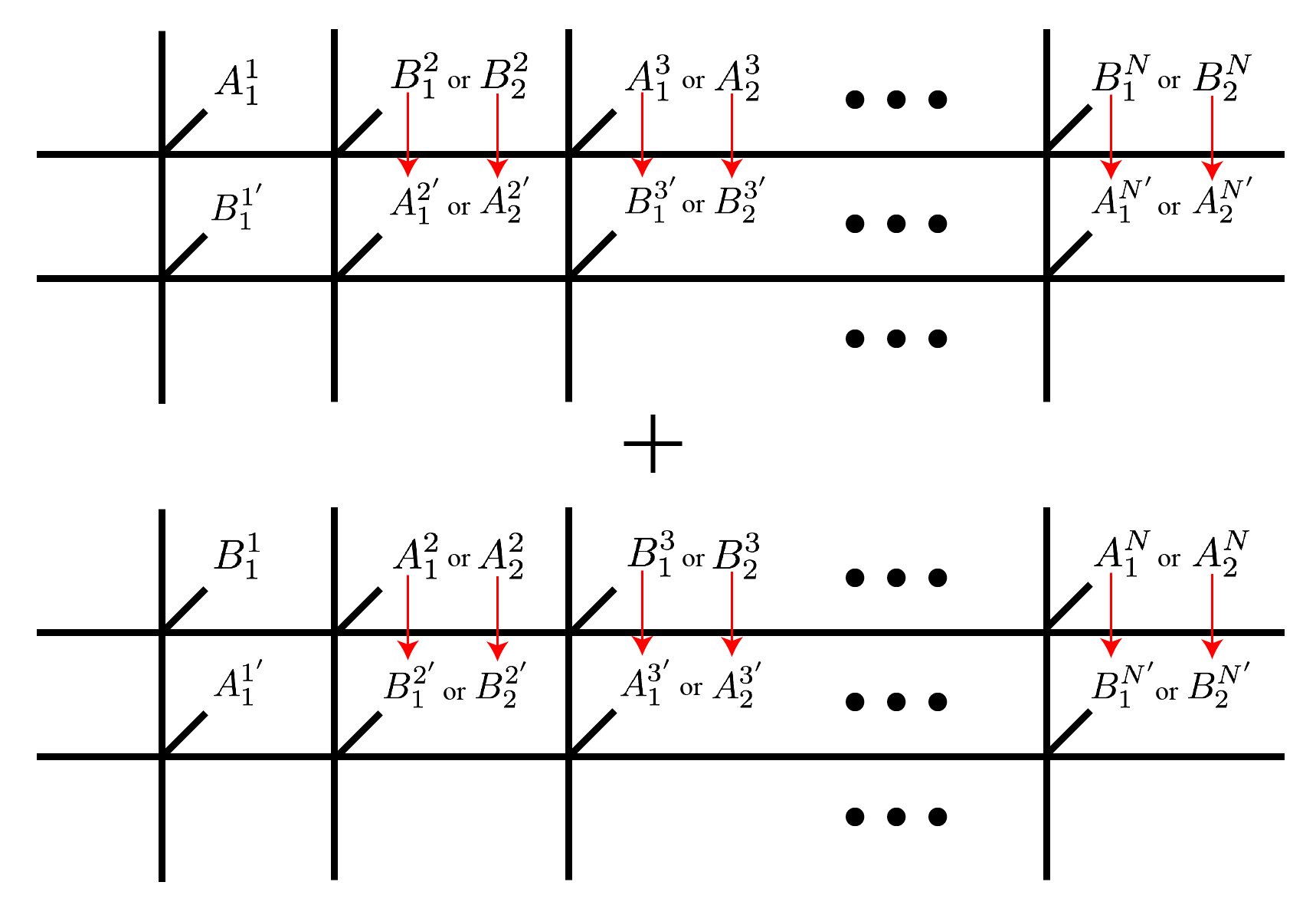}\par
}

Here, the red arrows indicate exclusive choices. In other words, define $F = \{ \gamma : \{2, 3, \dots, N \} \tto \{1,2\} \}$. Then:
\begin{align*}
	\psi(T_{v_1} \oplus T_{v_{1^\prime}} \oplus \cdots \oplus T_{v_{N^\prime}}) &= \sum_{\gamma \in F} \left( A^1_1 \ot B^{1^\prime}_1 \ot B^2_{\gamma(2)} \ot A^{2^\prime}_{\gamma(2)} \ot \cdots \ot B^N_{\gamma(N)} \ot A^{N^\prime}_{\gamma(N)} \right. \\
	&\hspace{8mm} \left. + B^1_1 \ot A^{1^\prime}_1 \ot A^2_{\gamma(2)} \ot B^{2^\prime}_{\gamma(2)} \ot \cdots \ot A^N_{\gamma(N)} \ot B^{N^\prime}_{\gamma(N)} \right)
\end{align*}

The picture shows why $N$ must be even. If $N$ were odd, then $B$ and $A$ would be switched at the far right, and contraction with the tensors at the far left would kill off the entire tensor.

This construction is key to the proof of Theorems \ref{thm:rank-of-TNS} and \ref{thm:rank-of-bound-TNS}.

\begin{proof} [Proof of Theorem \ref{thm:rank-of-TNS}] Let $N$ be even. Let $e_1, e_2$ be a basis of $\C^2$. Define $A^1_1 = e_1$, $B^1_1 = e_2$, $B^{1^\prime}_1 = e_1$, $A^{1^\prime}_1 = e_2$. For $n \in \{2, 2^\prime, \dots, N, N^\prime\}$, define $A^n_1 = B^{n}_1 = e_1$ and $A^n_2 = B^n_2 = e_2$. Let $T_{v_1}, T_{v_{1^\prime}}, T_{v_2}, T_{v_{2^\prime}}, \dots, T_{v_N}, T_{v_{N^\prime}}$ be as in the above construction. Let $T = T_{v_1} \oplus T_{v_{1^\prime}} \oplus \cdots \oplus T_{v_{N^\prime}}$.

Then,
\[ \psi(T) = \sum_{i_1, \dots, i_N \in \{1,2\}} e_{i_1} \ot e_{i_1} \ot e_{i_2} \ot e_{i_2} \ot \cdots \ot e_{i_N} \ot e_{i_N} \]

Consider the upper/lower flattening (from vertices $1, 2, \dots, N$ to $1^\prime, 2^\prime, \dots, N^\prime$). This flattening satisfies:
\[ \psi(T)(e_{i_1}, e_{i_2}, \dots, e_{i_N}) = e_{i_1} \ot e_{i_2} \ot \cdots \ot e_{i_N} \]

Thus, the flattening has rank $2^N$, so $\psi(T)$ has border rank at least $2^N$. We have an explicit expression for $\psi(T)$ as the sum of $2^N$ rank $1$ tensors, so in fact,
\[ 2^N = \ul{R}(\psi(T)) = R(\psi(T)) \]
Since border rank is a semi-continuous function, a generic tensor in $\TNS(G_{2 \times N}, 2, 2)$ has border rank at least $2^N$.
\end{proof}

To summarize, let $n \in \{1, 1^\prime, 2, 2^\prime, \cdots, N, N^\prime \}$, $i \in \{1, 2\}$, and pick $A^n_i, B^n_i \in \C^2$. If $F = \{ \gamma : \{2, 3, \dots, N\} \tto \{1,2\} \}$, then
\begin{equation}
\begin{split}
	&\sum_{\gamma \in F} \left( A^1_1 \ot B^{1^\prime}_1 \ot B^2_{\gamma(2)} \ot A^{2^\prime}_{\gamma(2)} \ot \cdots \ot B^N_{\gamma(N)} \ot A^{N^\prime}_{\gamma(N)} \right. \\
	&\hspace{8mm} \left. + B^1_1 \ot A^{1^\prime}_1 \ot A^2_{\gamma(2)} \ot B^{2^\prime}_{\gamma(2)} \ot \cdots \ot A^N_{\gamma(N)} \ot B^{N^\prime}_{\gamma(N)} \right) \in \TNS(G_{2 \times N}, 2, 2)
\end{split}
\end{equation}

Therefore, by a simple parameter count, the manifold consisting of all tensors of the above form is at most $8N-4$ dimensional. Another parameter count implies that $\TNS(G_{2 \times N}, 2, 2)$ is at most $2^5 \cdot 2N$ dimensional, since it is the image of $\psi$. 

We now turn our attention to the bound tensor network state and to Theorem \ref{thm:rank-of-bound-TNS}. 

\begin{proof} [Proof of Theorem \ref{thm:rank-of-bound-TNS}]
	Let $N$ be even. Let $A = C = e_1$ and let $B = D = e_2$. Define $T \in (\C^2)^{\ot 5}$ to be the following tensor:
	
	{\centering
	\includegraphics[width=.7\textwidth]{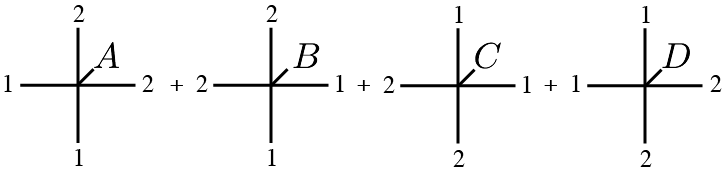} \par
	}
	
	Let $\phi : (\C^2)^{\ot 5} \tto (\C^2)^{\ot 2N}$ be the construction function for the bound tensor network state. Then consider the flattening of $\phi(T)$ depicted in the following graph coloring:
	
	{\centering
	\includegraphics[width=.7\textwidth]{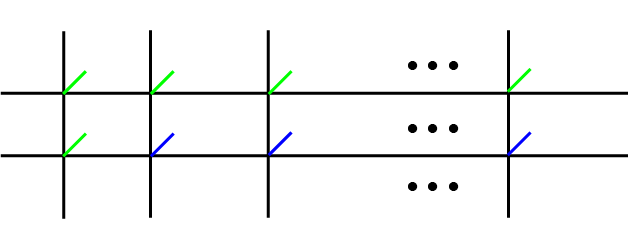} \par
	}
	
	The flattening is from the green edges to the blue edges. Recall the argument from the proof of Theorem \ref{thm:rank-of-TNS}. If we number the indices of the input of the flattening to correspond with the vertices in the following order: $1, 1^\prime, 2, 3, \dots, N$, then we have that
	\[ \phi(T)(e_1, e_1, e_{i_2}, e_{i_3}, \dots, e_{i_N}) = 2 e_{i_2} \ot e_{i_3} \ot \cdots \ot e_{i_N} \]
	And thus, the flattening rank of $\phi(T)$ in this $N+1,N-1$ flattening is $2^{N-1}$. The generic statement follows from the fact that border rank is sem-continuous.
\end{proof}

\section{Maximizing flattening rank} In the proof of Theorem \ref{thm:rank-of-TNS}, we found a tensor which gave optimal border rank bounds via its $N,N$ matrix flattening. It is still an open question to find an explicit tensor (explicit in the sense of complexity theory; see, e.g., \cite{MR2870721}) in $(\C^k)^{\ot 2N}$ where \textit{all} $N,N$ flattenings have maximal rank $k^N$.

There is no hope that such a tensor is in $\TNS(G_{2 \times N}, 2, 2)$ because the rank of a particular flattening for $T \in \TNS(G_{2 \times N}, 2, 2)$ is constrained by the max-flow/min-cut inequality. That is, if we consider the flattening from vertices $i_1, i_2, \dots, i_s \tto i_{s+1}, i_{s+2}, \dots, i_{2N}$, the rank of this flattening is less than or equal to
\[ 2^{\mc((i_1, i_2, \dots, i_s), (i_{s+1}, i_{s+2}, \dots, i_{2N}))} \]
Here, $\mc((i_1, i_2, \dots, i_s), (i_{s+1}, i_{s+2}, \dots, i_{2N}))$ denotes the size of the min-cut between the vertices $\{i_1, i_2, \dots, i_s\}$ and $\{i_{s+1}, i_{s+2}, \dots, i_{2N}\}$. For the general quantum max-flow/min-cut inequality see, e.g., Corollary 3.7 of \cite{MR3513725}.

So, for tensor network states, it is more interesting to study the following question:

\begin{quest} \label{quest:tns-flattening-qmf-question}
	For a graph $G$ with $2N$ physical edges and $d,k \in \N$, does there exist some $T \in \TNS(G, d, k)$ such that all flattenings of $T$ have the maximal rank possible, as constrained by the quantum max-flow/min-cut inequality? If not, is there some tensor whose $N,N$ flattenings have the maximal rank possible?
\end{quest}

This question can be phrased geometrically. Fix a graph $G$ and a function $f : G \tto E$. Let $V_1, V_2, \dots, V_n$ be the vector spaces associated to the physical edges of $G$. For $S \inc [n]$, define $\Flat_{S}^{\leq r}$ to be the projectivization of the set of tensors in $V_1 \ot V_2 \ot \cdots \ot V_n$ whose $S \tto [n] \setm S$ flattening has rank at most $r$. Define 
\[ \Flat_s^{\leq r} := \bigcap_{\substack{S \inc [n] \\ |S| = s}} \Flat_S^{\leq r}\] 
Finally, define $\QMF(G,f)$ to be the projectivization of the set of tensors in $V_1 \ot V_2 \ot \cdots V_n$ which obey the quantum max-flow/min-cut inequality. For some $s,r$, we have
\[ \ol{\P\TNS(G,f)} \inc \QMF(G,f) \inc \Flat_s^{\leq r} \]

In general $\TNS(G,f) \not= \QMF(G,f)$ because it is possible that $\TNS(G,f)$ does not contain any tensors that have maximal flattening rank for a given flattening (e.g., see \cite{arXiv:1801.09106}). Question \ref{quest:tns-flattening-qmf-question} asks when $\TNS(G, d, k) = \QMF(G, d, k)$.

Several empirical findings lead us to the following conjecture:

\begin{conj}
	For $d=2$ and $k$ sufficiently large, there exists a tensor $T \in \TNS(G_{2 \times N}, d, k)$ such that all flattenings of $T$ have maximal rank, as constrained by the quantum max-flow/min-cut inequality.
\end{conj}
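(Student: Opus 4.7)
The plan is to combine the irreducibility of the parameter space with per-cut explicit constructions in the spirit of the proof of Theorem~\ref{thm:V-R-question}. The source of the construction function $\psi : \bigoplus_{v} \mathbf{V}_v \tto \OT_{p_i \in \PP} W_i$ is an affine space and hence irreducible. For each ordered pair $(S, S^c)$ of complementary subsets of the $2N$ physical edges, the condition that the $S \tto S^c$ flattening has rank at least $2^{\mc(S,S^c)}$ is Zariski-open on the target, and its pullback under $\psi$ is a Zariski-open subset of the irreducible source. Since the intersection of finitely many non-empty Zariski-open subsets of an irreducible variety is non-empty, it suffices to show, for each single $S$, that there is \emph{some} tensor $T \in \TNS(G_{2\times N}, 2, k)$ whose $S$-flattening attains the QMF bound.

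For a fixed subset $S$, I would fix a min-cut $C_S \inc E$ of entanglement edges realizing $\mc(S, S^c)$. The idea is to place at each vertex a tensor which acts ``trivially'' across entanglement edges outside $C_S$ (so that contractions along those edges do not collapse the rank) and which carries the iterated matrix multiplication structure across the edges of $C_S$. Concretely, one can adapt the IMM placement of Theorem~\ref{thm:V-R-question} so that the contractions along $E \setm C_S$ effectively decouple the two components, leaving a tensor on $C_S$ whose physical legs parametrize $2^{|C_S|}$ independent basis elements on the $S$-side. Using the same kind of index-chasing argument as in the proof of Theorem~\ref{thm:V-R-question}, one verifies that every basis vector of a $2^{\mc(S,S^c)}$-dimensional subspace of the flattening target lies in the image, whence the rank is maximal. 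Taking $k \geq 2^{\max_S \mc(S, S^c)}$ (a finite quantity depending only on $N$) guarantees enough room in each physical leg for these controls to be implementable.

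The principal obstacle is combinatorial: min-cuts on the $2 \times N$ torus come in a genuinely larger variety of topological types than the ``straight slices'' encountered in the non-torus proofs of Theorems \ref{thm:V-R-question} and \ref{thm:V-R-question-unbound}. A cut can wrap non-trivially around either generator of $H_1$ of the torus, and can zig-zag between the two rows, so a single uniform recipe does not obviously suffice. My plan for this step is to first classify min-cuts up to the symmetry group of $G_{2\times N}$ into a manageable list of canonical shapes, and then give a separate (but essentially parallel) placement recipe for each shape, in each case reducing to an index-chase in the style of Theorem~\ref{thm:V-R-question}.

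A secondary, more analytic obstacle is that the argument as sketched only yields $k$ ``sufficiently large'' in the qualitative sense of the conjecture; pinning down the sharp threshold reduces to computing the maximum value of $\mc(S, S^c)$ over subsets $S$ of vertices of $G_{2\times N}$, which is a purely combinatorial question about the torus grid. I would expect this maximum to grow linearly in $N$, so that the required bond dimension $k$ is of order $2^{cN}$ for some explicit constant $c$, which is consistent with the ``$k$ sufficiently large'' hypothesis of the conjecture.
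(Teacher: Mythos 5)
First, a framing remark: this statement is a \emph{conjecture} in the paper --- the author offers only heuristic evidence (an explicit $k=4$ tensor whose flattenings are ``almost'' all maximal) and in fact suggests the question is out of reach --- so there is no proof of record to compare yours against, and your proposal must stand on its own. Its first half does stand: the reduction is correct and is a genuinely useful observation. The source $\bigoplus_v \mathbf{V}_v$ of $\psi$ is an affine space, hence irreducible; for each bipartition $S$ the locus where the $S$-flattening has rank at least $2^{\mc(S,S^c)}$ is the pullback under the polynomial map $\psi$ of a Zariski-open condition, hence Zariski-open; and a finite intersection of nonempty open subsets of an irreducible variety is nonempty (indeed dense). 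So the conjecture is exactly equivalent to per-cut achievability: for each single $S$, some choice of vertex tensors attains rank $2^{\mc(S,S^c)}$.

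The gap is that you never produce these per-cut tensors, and the plan you propose for them --- classify min-cuts of the torus grid up to symmetry into ``canonical shapes,'' then design a recipe per shape --- would not work as stated: $S$ ranges over all $2^{2N}$ subsets of vertices, not just contiguous bands, while the symmetry group of $G_{2\times N}$ has order $O(N)$, so there is no manageable list of orbits; moreover the IMM gadget of Theorem \ref{thm:V-R-question} is tailored to one particular flattening. The repair is to notice that no classification is needed. In $G_{2\times N}$ \emph{every} vertex carries a physical edge, i.e.\ every vertex is a terminal of the flow problem, so for $k \geq 2^4$ the min-cut for $(S,S^c)$ is simply the edge boundary $\partial S$: the set of entanglement edges with one endpoint in $S$ and one in $S^c$ (any cut must contain every such edge, and this set is itself a cut, and cutting a physical edge of capacity $k \geq 16$ never beats cutting the at most four entanglement edges at that vertex). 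Now use one uniform local construction: fix an embedding $(\C^2)^{\ot 4} \inc \C^k$; at each vertex $v$, for each boundary edge $e$ at $v$ pair a distinct $\C^2$-slot of the physical space with $e$ via $\sum_i e_i \ot e_i$ (dualized according to the edge orientation), and place $e_1$ on every non-boundary edge and every unused physical slot. Contraction turns each boundary edge into a perfectly entangled pair joining a physical slot on the $S$ side to one on the $S^c$ side, and all other contractions produce nonzero scalars, so the $S$-flattening has rank exactly $2^{|\partial S|} = 2^{\mc(S,S^c)}$. (This is the routing argument behind quantum max-flow $=$ quantum min-cut for uniform edge dimension; compare \cite{MR3513725}.) Note this also corrects your threshold estimate: $k \geq 16$ suffices, a constant, not $k \sim 2^{\max_S \mc(S,S^c)} \sim 2^{cN}$ --- each physical leg only ever has to source the at most four qubits crossing the cut at its own vertex, so the ``controls'' are local. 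With this per-cut step supplied, your openness argument upgrades the conjecture to a theorem, which is more than the paper itself claims.
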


We suspect this is true because we can explicitly construct tensors for which ``almost'' all flattenings have maximal rank and these tensors have different deficiencies (i.e., for different tensors, different flattenings drop rank). So, we suspect that one could increase $k$ and ``stitch'' together these tensors to obtain an ideal tensor.

One example of such a tensor is the following. Let $\phi : \C^4 \ot (\C^2)^{\ot 4} \tto (\C^4)^{\ot 2N}$ be the construction function for the bound tensor network state $\TNS^\prime(G_{2 \times N}, 2, 4)$. Let $A, B, C, D$ be a basis for $\C^4$. Then let $T$ be the following tensor in $\C^4 \ot (\C^2)^{\ot 4}$:

{\centering
\includegraphics[width=.7\textwidth]{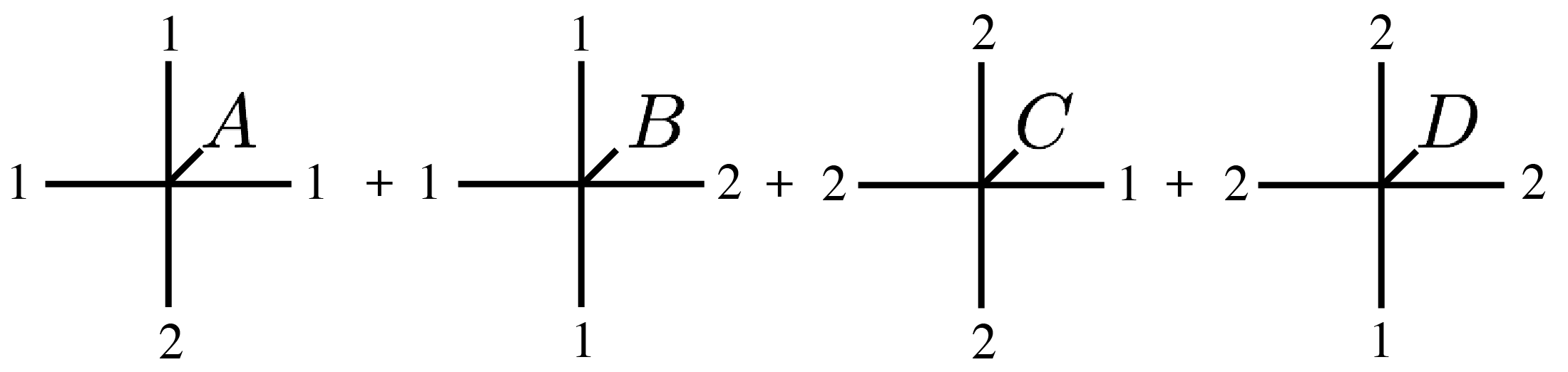}\par
}

This tensor has the property that if $A$ appears on a physical edge in the $2 \times N$ grid (i.e. in a term of the induced expression of $\phi(T)$), the contraction forces either $A$ or $B$ to appear to the right of that edge. It also forces either $A$ or $C$ to appear to the left. Either $B$ or $D$ can appear above, and either $C$ or $D$ can appear below. When we fully categorize which vectors can ``survive'' to the left/right/top/bottom of $A,B,C,D$, we obtain the following:

\begin{table}[h]
	\begin{tabular}[t]{c | c | c}
		& B,D & \\ \hline
		A,C & \textbf{A} & A,B \\ \hline
		& C,D & \\
	\end{tabular}
	\hfill
	\begin{tabular}[t]{c | c | c}
		 & B,D & \\ \hline
		A,C & \textbf{B} & C,D \\ \hline
		& B,A & \\
	\end{tabular}
	\hfill
	\begin{tabular}[t]{c | c | c}
		 & A,C & \\ \hline
		B,D & \textbf{C} & A,B \\	 \hline
		& C,D & \\
	\end{tabular}
\end{table}
\vspace{-5mm}
\begin{center}
	\begin{tabular}[t]{c | c | c}
		 & A,C & \\ \hline
		B,D & \textbf{D} & C,D \\ \hline
		& B,A & \\
	\end{tabular}
\end{center}

In particular, since the grid is $2 \times N$, and ``above'' and ``below'' are the same, this simplifies:
\begin{table}[h]
	\begin{tabular}[t]{c | c | c}
		& D & \\ \hline
		A,C & \textbf{A} & A,B
	\end{tabular}
	\hfill
	\begin{tabular}[t]{c | c | c}
		 & B & \\ \hline
		A,C & \textbf{B} & C,D
	\end{tabular}
	\hfill
	\begin{tabular}[t]{c | c | c}
		 & C & \\ \hline
		B,D & \textbf{C} & A,B
	\end{tabular}
	\hfill
	\begin{tabular}[t]{c | c | c}
		 & A & \\ \hline
		B,D & \textbf{D} & C,D
	\end{tabular}
\end{table}

One can see that locally, this tensor allows us to ``transmit'' the maximal number of linearly independent vectors over each entanglement edge ($2$ to the left, right, top, and bottom). Globally, we observe a cancelling deficiency that causes some flattenings to drop rank. Unfortunately, this indicates that the local behavior of the contraction is not enough to make a statement about the global behavior of the flattenings. That is, to answer Question \ref{quest:tns-flattening-qmf-question} for $G_{2 \times N}$, one must deal with the entire space $\bigoplus_{n=1}^{2N} (\C^d)^{\ot 4} \ot \C^k$. So, given that this space is very large, we suspect that  this question is out of reach (at least for $G_{2 \times N}$) using existing techniques.

\section*{Acknowledgements} I'd like to thank Dr. J. M. Landsberg for his unwavering support and guidance. Without his insight into this subject and patient oversight of my work, this paper would not have been possible.

\bibliography{citations}

\end{document}